\documentclass[11pt]{amsart}
\usepackage{amsmath,amscd} 
\usepackage{latexsym,amsmath,amssymb,mathrsfs,setspace,enumerate}
\usepackage[T1]{fontenc}
\usepackage[utf8]{inputenc} 
\usepackage{amssymb}
\usepackage{tikz}
\usepackage{tikz-cd}

\usepackage[left=3cm, right=3cm, bottom=3cm,top=3cm]{geometry}
\newtheorem{thm}{Theorem}[section]

\newtheorem{propo}[thm]{Proposition}

\newtheorem{corol}[thm]{Corollary}
\newtheorem{defn}[thm]{Definition}

\newtheorem{remark}[thm]{Remark}

\begin{document}
\title{ On the Embedding of $ \Gamma $- semigroup Amalgam}
\date{}

\author{ Smisha M A $^1$ and  P. G. Romeo$^2$}
\address{$^1$ Assistant Professor, Department of Mathematics, St. Michael's College, Cherthala, Kerala, India} 
\address{$^2$ Professor, Department of Mathematics, Cochin University of Science and Technology, Kochi, Kerala, INDIA.}
\email{$^2 smishaashok@gmail.com , ^2 romeo_-parackal@yahoo.com\,,   $}
\subjclass{Primary 06F99; Secondary 06F05}
\keywords{$\Gamma$- semigroups , Semigroup Amalgam , $\Gamma$-monomorphisms , free $\Gamma$-product}

\begin{abstract}
$\Gamma$ semigroup is introduced as a generalization of semigroups by M. K Sen and Saha. In this paper we describe amalgam of two $ \Gamma $- semigroups and discuss the embeddability of this amalgam. Further we obtained a necessary condition for the embeddability of completely $ \alpha $-regular $ \Gamma $-semigroup amalgam.
\end{abstract}

\maketitle

A non-empty set together with a binary operation which is associative is known a semigroup. A semigroup 
is a more general algebraic structure than groups, obviously all groups are semigroups. Similar to that of free groups we have free semigroup of a generating set. For a detailed description of these concepts see \cite{HOW}.  
$ \Gamma $-semigroups is a generalization of semigroups analogous to the concept of $ \Gamma $-ring introduced by Nobusawa \cite{NOB}, \cite{BAR}.

\section{Preliminaries}
In the following first we briefly recall amalgams of groups, semigroups and free product of semigroup amalgams, then we 
recall $\Gamma $- semigroups, the amalgam of $ \Gamma $- semigroups and their properties as described by M K Sen and Saha  \cite{SEN}.

A group amalgam consists of three groups  $ G , K $ and  $ H $, called core of the amalgam and  embeddings $ m :H\rightarrow G $ and $ n :H\rightarrow K $. The amalgam is said to be embeddable if we can find another group $T$ which contains $ G \cup K $ containing  $H$. If $ G \cup K = H $ in $ T $, then  amalgam is called strongly embeddable.  Schreierr (1927) in \cite{SCHR} and  \cite{NEN} has shown that group amalgam is always embeddable. But all semigroup amalgams are not always embeddable.
 
\begin{defn} (cf.\cite{HOW} )
A semigroup amalgam  $\mathcal{K} = [\{ S_{i}; i \in I \} : U : \{ \theta_{i}; i \in I \}]$ consists of a semigroup $U$, the core of the amalgam, a family of mutually disjoint semigroups $\{ S_{i} : i \in I \}$ and a family of monomorphisms $\theta_{i} : U  \rightarrow S_{i}$\, $i \in I$.
\end{defn}

We say that the amalgam $\mathcal{K}$ is embedded in another semigroup $T$ if there exists
monomorphism $\mu : U \to T$ and for each $i \in I$ a monomorphism $\mu_{i} : S_{i} \to T$ with
\begin{enumerate}
\item  $\theta_{i}\mu_{i}=\mu$ for each $i \in I$.
\item $S_{i}\mu_{i} \cap S_{j}\mu_{j} = U \mu$ for all $i, j \in I$ such that $i \neq j.$
\end{enumerate}
The amalgam is called  weakly embeddable if it satisfies condition (1) only and is strongly embeddable
if  both conditions are satisfied. We say that the embedding of the amalgam is possible whenever such a semigroup $T$ exists.

A free product is an operation that takes two algebraic objects and construct a new object in the same category.
For a family of semigroups $\{ S_{i}:i \in I \}$ , the collection of all finite strings of the form $(a_{1},a_{2},\cdots , a_{n})$ for $a_{i} \in \{ \cup S_{i} ; i \in I$ \} is a semigroup with operation juxtaposition and is known as free product of semigroups denoted by $\Pi ^{*}S_{i}$.
The free product $\Pi ^{*}_{U}S_{i}$ of the amalgam $\mathcal{A}= [U;S_{i},\phi_{i}]$ is defined as the quotient semigroup of the ordinary free product $\Pi ^{*}S_{i}$ in which for each $i$ and $j$ in $I$ the image $\phi_{i}(u)$  of the image $u$ of $U$ in $S_{i}$ is identified its image $\phi_{j}(u)$ in $S_{j}$.
Howie \cite{HOW} showed that there always exist a mapping $ \mu_{i} $ from each $S_{i}$ of the amalgam to the amalgamated free product with the following properties: 
\begin{itemize}
\item each $\mu_{i}$ is one-one .
\item $ \mu_{i}(S_{i}) \cap \mu_{j}(S_{j}) \subseteq \mu(U) $ for all $i,j$ in $I$ such that $i \neq j$.
\end{itemize}
If the above two conditions hold, then we say that the amalgam $\mathcal{A} $ is \textit{naturally embedded in its free product} and based on this \cite{HOW} [Theorem 8.2.4] proved that the semigroup amalgam is embeddable in a semigroup if and only if it is naturally embedded in its free product.

\begin{defn} (cf.\cite{SEN})
Let $ S,\Gamma $ be two non-empty sets. Then $S$ is called a $\Gamma $-semigroup if there exist a mapping from $S \times \Gamma \times S$ to $S$ which maps $(a,\gamma,b)\rightarrow a \gamma b$ satisfying the condition $(a\gamma)b \mu c=a \gamma(b \mu c)$ for all $a,b,c \in S$ and $\gamma,\mu \in \Gamma$.
\end{defn}

Let $S$ be a $\Gamma$-semigroup, $A$ and $B$ are two subsets of $S$ , then the set  
$\{a\gamma b : a \in A , b \in B \, \gamma \in \Gamma\}$ is denoted by $A \Gamma B$. A nonempty subset $A\subset S$ of a $\Gamma$-semigroup is called  a  $\Gamma$-subsemigroup of $S$ if 
$A\Gamma A \subset A$. 
For $S,T$ be two $\Gamma$-semigroups with left identities $e$ and $e'$, 
a mapping $f:S\rightarrow T$ satisfying $f(e)=e'$ and $f(a\gamma b)=f(a) \gamma f(b)$ for each $a,b \in S$ and 
$\gamma \in \Gamma$ is called  a $\Gamma$-semigroup homomorphism.

Next we generalize amalgams of semigroups to amalgams of $\Gamma$-semigroups in terms of $\Gamma$-mappings. 

\begin{defn}
Let $U_{1}, S_{1}, S_{2}$ be mutually disjoint non-empty sets  and $\Gamma_{0}, \Gamma_{1}, \Gamma_{2}$ such that  $U$ is a $\Gamma_{0}$-semigroup called core of the amalgam, $S_{1}$ is a $\Gamma_{1}$-semigroup, $S_{2}$ is $\Gamma_{2}$-semigroup. Then the collection $\textsc{A}=[(U,\Gamma_{0}),(S_{1},\Gamma_{1}),(S_{2},\Gamma_{2}),f_{1},f_{2}]$ constitutes a $\Gamma$-semigroup amalgam where $f_{i}=(f_{i}^{'},f_{i}^{''})$ are $\Gamma$-monomorphisms such that $f_{i}^{'}:U \to S_{i}$ and $f_{i}^{''}:\Gamma_{0} \to \Gamma_{i}$ for $i=\{1,2 \}$.
\end{defn}

A  $\Gamma$-semigroup amalgam $\textsc{A}=[(U,\Gamma_{0}),(S_{1},\Gamma_{1}),(S_{2}, \Gamma_{2}),f_{1},f_{2}]$ is said to be embeddable in another $\Gamma$-semigroup $(T,\Gamma_{T})$  if there exist mappings ($\Gamma$-monomorphisms)$g=(g',g''):(U,\Gamma_{0}) \to (T,\Gamma_{T})$ and  $g_{i}=(g_{i}^{'},g_{i}^{''}) : (S_{i},\Gamma_{i}) \to (T,\Gamma_{T})$ where $g_{i}: S_{i} \to T$ and $g_{i}^{''}:  \Gamma_{i} to \Gamma_{T}$ for $i=1,2$  and $g': U \to T$ , $g'' : \Gamma_{0} \to \Gamma_{T}$ such that the following holds true:
\begin{itemize}
\item[\textbf{[1]}] : the diagram \\
\begin{tikzcd}
(U,\Gamma_{0}) \arrow[r,"f_{1}"] \arrow [d,"f_{2}"] \arrow[dr,"f"] & (S_{1},\Gamma_{1}) \arrow[d, "g_{1}"] \\
 (S_{2},\Gamma_{2}) \arrow[r,"f_{2}"] & (T,\Gamma_{T})
\end{tikzcd} commutes and 
\item[\textbf{[2]}] : $g_{1}((S_{1},\Gamma_{1})) \cap g_{2}((S_{2},\Gamma_{2})) = g((U, \Gamma_{0}))$
\end{itemize}

The second condition can be restated as: 
\begin{itemize}
\item[•] whenever $g_{1}(s_{1}\gamma_{1}s_{1}^{'})=g_{2}(s_{2}\gamma_{2}s_{2}^{'})$ (for $s_{1},s_{1}^{'} \in S_{1}$, $s_{2},s_{2}^{'} \in S_{2}$, $\gamma_{1} \in \Gamma_{1}$ and $\gamma_{2} \in \Gamma_{2}$) there exists $u \in U$ and $\gamma_{0} \in \Gamma_{0}$ such that $f_{i}^{'}(u)=s_{i}$ and $f_{i}^{''}(\gamma_{0})=\gamma_{i}$ for $i=1,2$.
\end{itemize}

If the amalgam satisfies condition $[1]$ then it is called weakly $\Gamma$-embeddable and if the amalgam satisfies  both conditions, then it is called strongly $\Gamma$-embeddable.

\section{Free product of $\Gamma$-semigroups}

Free product of an idexed family of pairwise disjoint semigroups was introduced by J M Howie \cite{HOW} and the notion of free $\Gamma$-semigroup was introduced by M K Sen \cite{SEN}.  In the following we define  free product of a family of mutually disjoint  $\Gamma$-semigroups $\{S_{i}:i \in I\}$ and a family $\{ \Gamma_{i}: i \in T\}$ which are also mutually disjoint. 
\begin{defn}
Let $\{ \Gamma_{i}: i \in T\}$ be a family of disjoint non-empty sets, $\{S_{i}:i \in I\}$ be a family of mutually disjoint  $\Gamma$-semigroups and  $x \in \cup \{S_{i} : i \in I\} $. Then there is a unique $l$ in $I$ such that $x \in S_{l} $ and $l$ is called pointer of $x$, denoted  by $l=\zeta(x)$. Similarly for  an element $\gamma \in \cup \{ \Gamma_{i}: i \in I\}$,  there is a unique $m$ in $I$ such that $\gamma \in \Gamma_{m}$ and $m$ is called pointer of $\gamma$.
\end{defn}

Consider $ \Delta $ as the collection of all elements of the form $ (x_{1}, \gamma_{1}, x_{2}, \gamma_{2}, \cdots , \gamma_{m-1}, x_{m} ) $ where $ m \geq 1$ is an integer, $x_{n} \in \cup \{S_{i} : i \in I\}$ for $n=1,2,3,\cdots, m$ , $\gamma_{k} \in \cup \{ \Gamma_{i}: i \in I\} $ for  $k=1,2,3,\cdots, m-1$ and $\zeta(x_{n}) \neq \zeta(x_{n+1})\neq \zeta(\gamma_{n})$ for $n=1,2,3,\cdots, m-1$ of all finite strings.
Define the product of two elements in $\Delta$ as follows: \\
$\textbf{a}=(a_{1}, \gamma_{1}, a_{2}, \gamma_{2}, \cdots , \gamma_{m-1}, a_{m} )$ and $ \textbf{b}=(b_{1}, \alpha_{1}, b_{2}, \alpha_{2}, \cdots , \alpha_{n-1}, b_{n} ) $ are elements in $ \Delta $ and  $\gamma \in \Gamma $, the 
product of $ \textbf{a} $ and $ \textbf{b} $ in $ \Delta $ is : 
\[
 \textbf{a $ \gamma $ b} = 
\begin{cases}
(a_{1}, \gamma_{1},  \cdots , \gamma_{m-1}, a_{m} \gamma b_{1}, \alpha_{1}, b_{2}, \alpha_{2}, \cdots , \alpha_{n-1}, b_{n} ) & \text{if } \zeta(a_{m} = \zeta(\gamma)= \zeta(b_{1}) \\
(a_{1}, \gamma_{1},  \cdots , \gamma_{m-1}, a_{m}, \gamma, b_{1}, \alpha_{1}, b_{2}, \alpha_{2}, \cdots , \alpha_{n-1}, b_{n} ) & \text{otherwise} 
\end{cases}
\]
\begin{thm}
Let $ \Delta =\{ (x_{1}, \gamma_{1}, x_{2}, \gamma_{2}, \cdots , \gamma_{m-1}, x_{m} ) \}$ where $ m \geq 1$ is an integer, $x_{n} \in \cup \{S_{i} : i \in I\}$ for $n=1,2,3,\cdots, m$ , $\gamma_{k} \in \cup \{ \Gamma_{i}: i \in I\} $ for  $k=1,2,3,\cdots, m-1$. Then  $ \Delta $ is a $ \Gamma $- semigroup
\end{thm}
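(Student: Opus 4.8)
The plan is to exhibit $(\Delta,\Gamma)$ as a $\Gamma$-semigroup, with parameter set $\Gamma:=\bigcup_{i\in I}\Gamma_i$ and the product displayed above, by checking the two requirements in turn: that $(\mathbf a,\gamma,\mathbf b)\mapsto\mathbf a\,\gamma\,\mathbf b$ is a well-defined map $\Delta\times\Gamma\times\Delta\to\Delta$, and that it obeys the $\Gamma$-associativity law $(\mathbf a\,\gamma\,\mathbf b)\,\mu\,\mathbf c=\mathbf a\,\gamma\,(\mathbf b\,\mu\,\mathbf c)$ for all $\mathbf a,\mathbf b,\mathbf c\in\Delta$ and $\gamma,\mu\in\Gamma$.

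\emph{Well-definedness.} Write $\mathbf a=(a_1,\gamma_1,\dots,\gamma_{m-1},a_m)$ and $\mathbf b=(b_1,\alpha_1,\dots,\alpha_{n-1},b_n)$ in $\Delta$, and let $\gamma\in\Gamma$. In the first clause we have $\zeta(a_m)=\zeta(\gamma)=\zeta(b_1)=:l$, so $a_m,b_1\in S_l$ and $\gamma\in\Gamma_l$; since $S_l$ is a $\Gamma_l$-semigroup, $a_m\gamma b_1$ is a well-defined element of $S_l$, and the output is a finite string whose single new entry $a_m\gamma b_1$ has pointer $l$. Because $\mathbf a\in\Delta$ forbids $\zeta(a_{m-1})=\zeta(\gamma_{m-1})=\zeta(a_m)$ and $\mathbf b\in\Delta$ forbids $\zeta(b_1)=\zeta(\alpha_1)=\zeta(b_2)$, the two new consecutive triples $(a_{m-1},\gamma_{m-1},a_m\gamma b_1)$ and $(a_m\gamma b_1,\alpha_1,b_2)$ satisfy the admissibility condition of $\Delta$; hence a single merge is enough and the output lies in $\Delta$. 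In the second (``otherwise'') clause the output is the concatenation $(a_1,\dots,a_m,\gamma,b_1,\dots,b_n)$, and the only new triple is $(a_m,\gamma,b_1)$, which is admissible precisely because we are not in the first clause. The short-word situations $m=1$ and/or $n=1$ are treated identically with fewer letters to inspect.

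\emph{Associativity.} I would organise the verification of $(\mathbf a\,\gamma\,\mathbf b)\,\mu\,\mathbf c=\mathbf a\,\gamma\,(\mathbf b\,\mu\,\mathbf c)$ according to whether the left seam (where $\gamma$ joins $\mathbf a$ and $\mathbf b$) and the right seam (where $\mu$ joins $\mathbf b$ and $\mathbf c$) each fall under the merging clause, together with the length of $\mathbf b$. If $\mathbf b=(b_1,\dots,b_n)$ has length $n\ge 2$, the two seams act on the distinct letters $b_1$ and $b_n$, so forming $(\mathbf a\,\gamma\,\mathbf b)\,\mu\,\mathbf c$ and $\mathbf a\,\gamma\,(\mathbf b\,\mu\,\mathbf c)$ performs the same two independent local operations in either order and yields the same string letter by letter; only associativity of concatenation is used. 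If $\mathbf b=(b)$ is a single letter, there are four possibilities, according to whether $\zeta(a_m)=\zeta(\gamma)=\zeta(b)$ and whether $\zeta(b)=\zeta(\mu)=\zeta(c_1)$. In the three cases where at least one of these fails, writing out the two sides gives equality directly. In the remaining case $\zeta(a_m)=\zeta(\gamma)=\zeta(b)=\zeta(\mu)=\zeta(c_1)=:l$, both sides reduce to the same string except at one position, which carries $(a_m\gamma b)\mu c_1$ on the left and $a_m\gamma(b\mu c_1)$ on the right; these two elements of $S_l$ coincide by the defining $\Gamma_l$-associativity axiom of the $\Gamma$-semigroup $S_l$. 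This is the only point at which the $\Gamma$-semigroup structure of the constituents $S_i$ is invoked.

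The anticipated obstacle is entirely one of bookkeeping rather than of mathematical content: once one also accounts for $\mathbf a$ or $\mathbf c$ reducing to a single letter and for the various coincidences of pointers at the seams, the associativity check fans out into a moderately large number of sub-cases, and one must keep careful track that every string produced along the way still satisfies the admissibility condition of $\Delta$ and that the ``otherwise'' branch is genuinely closed under these manipulations. Each sub-case, however, closes either by associativity of string concatenation or by the single axiom $(x\gamma y)\mu z=x\gamma(y\mu z)$ in the appropriate $S_i$, and assembling them establishes that $(\Delta,\Gamma)$ is a $\Gamma$-semigroup.
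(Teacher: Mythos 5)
Your proof is correct and follows the same strategy as the paper: a direct case analysis on whether each of the two seams falls under the merging clause. It is in fact more complete than the paper's version, which only writes out the case where both seams merge and the middle word has length at least two (a case that, as you observe, uses nothing but associativity of concatenation) and dismisses the rest with ``similarly''; you explicitly isolate the one genuinely non-trivial configuration --- a length-one middle word with both seams merging, where the identity $(a_m\gamma b)\mu c_1=a_m\gamma(b\mu c_1)$ in the constituent $S_l$ is actually invoked --- and you also supply the well-definedness/closure check (that the product of two admissible strings is again an admissible string), which the paper omits entirely.
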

\begin{proof}
For associativity, we have to consider the following cases. Let  
\begin{eqnarray*}
\textbf{a} &= &(a_{1}, \gamma_{1}, a_{2}, \gamma_{2}, \cdots , \gamma_{m-1}, a_{m} )\\
 \textbf{b} &=& (b_{1}, \alpha_{1}, b_{2}, \alpha_{2}, \cdots , \alpha_{n-1}, b_{n} ) \\
  \textbf{c} &= & (c_{1}, \beta_{1}, c_{2}, \beta_{2}, \cdots , \beta_{p-1}, c_{p} ) 
\end{eqnarray*} 
in $ \Delta $ and $\alpha , \beta  $ in $ \cup \{ \Gamma_{i}: i \in I\}$. \\[4 pt]
When 
$\zeta (a_{m}) = \zeta (\alpha)= \zeta(b_{1})$ and $\zeta (b_{n}) = \zeta (\beta)= \zeta(c_{1})$ \\ 
$\textbf{a $ \alpha $  b } = (a_{1}, \gamma_{1}, a_{2}, \gamma_{2}, \cdots , \gamma_{m-1}, a_{m} \alpha  b_{1}, \alpha_{1}, b_{2}, \alpha_{2}, \cdots , \alpha_{n-1}, b_{n} )$  \\
$\textbf{b $ \beta $ c}= b_{1}, \alpha_{1}, b_{2}, \alpha_{2}, \cdots , \alpha_{n-1}, b_{n} \beta c_{1}, \beta_{1}, c_{2}, \beta_{2}, \cdots , \beta_{p-1}, c_{p}) $ 
\begin{eqnarray*}
\textbf{(a $ \alpha $ b )  $ \beta $ c } & = & (a_{1}, \gamma_{1},  \cdots , \gamma_{m-1}, a_{m}  \alpha  b_{1}, \alpha_{1},  \cdots , \alpha_{n-1}, b_{n} \beta c_{1}, \beta_{1},  \cdots , \beta_{p-1}, c_{p})\\
& = & \textbf{a $ \alpha $ ( b  $ \beta $  c) }  
\end{eqnarray*}
In a similar way one can see the associativity follows in other situations also. Hence $ \Delta  $ is a $ \Gamma $- semigroup.
\end{proof}

\subsection*{Free product of disjoint $ \Gamma $- semigroups }
Here we are considering the case of two $ \Gamma $- semigroups over the same $ \Gamma $ and the case of a family of disjoint $ \Gamma $- semigroups over the same $ \Gamma $ is similar in nature.\\
 Consider non-empty sets $S_{1},S_{2},\Gamma$ and let $ S_{1} $ and $ S_{2} $ are $ \Gamma $- semigroups over the same $ \Gamma $. As earlier, the set of all elements of the form $(a_{1}, \gamma_{1}, a_{2}, \gamma_{2}, \cdots , \gamma_{m-1}, a_{m} )$ where $a_{1}, a_{2}, \cdots ,  a_{m} \in S_{1} \cup S_{2}$ and $ \gamma_{1},  \gamma_{2}, \cdots , \gamma_{m-1} \in \Gamma $ is a $ \Gamma $- semigroup and is called free-$ \Gamma $-semigroup product over the alphabet $ S_{1} \cup S_{2} $ relative to $ \Gamma $. We denote it by $ S_{i}^{*}\Gamma $. The empty word is the word which has a no letters. Operation on $ S_{i}^{*}\Gamma $ is defined as same as that we defined earlier.\\
 For \begin{eqnarray*}
 \textbf{a} & = & (a_{1}, \gamma_{1}, a_{2}, \gamma_{2}, \cdots , \gamma_{m-1}, a_{m} )\\
 \textbf{b} & = & (b_{1}, \alpha_{1}, b_{2}, \alpha_{2}, \cdots , \alpha_{n-1}, b_{n} ) 
\end{eqnarray*} 
 in $ S_{i}^{*} \Gamma $ and $ \gamma \in \Gamma $
\[
 \textbf{a $ \gamma $ b} = 
\begin{cases}
(a_{1}, \gamma_{1}, a_{2}, \gamma_{2}, \cdots , \gamma_{m-1}, a_{m} \gamma b_{1}, \alpha_{1}, b_{2}, \alpha_{2}, \cdots , \alpha_{n-1}, b_{n} ) & \text{if } \zeta(a_{m} =  \zeta(b_{1}) \\
(a_{1}, \gamma_{1}, a_{2}, \gamma_{2}, \cdots , \gamma_{m-1}, a_{m}, \gamma, b_{1}, \alpha_{1}, b_{2}, \alpha_{2}, \cdots , \alpha_{n-1}, b_{n} ) & \text{if } \zeta(a_{m} \neq  \zeta(b_{1}) 
\end{cases}
\]
It is straight forward to check associativity for the various cases arise, and so  we can see that 
$ S_{i}^{*}\Gamma $ is a $ \Gamma $-semigroup.

\begin{remark}
There are elements of $ S_{i}^{*}\Gamma $ which are strings $ (a_{i}) $ of length one where $a_{i} \in S_{1} \cup S_{2}$. In fact $ S_{i}^{*}\Gamma $ is generated by strings of length one, since $$ (a_{1}, \gamma_{1}, a_{2}, \gamma_{2}, \cdots , \gamma_{m-1}, a_{m} )=(a_{1}), (\gamma_{1}), (a_{2}), (\gamma_{2}), \cdots , (\gamma_{m-1}), (a_{m} ) $$  for every ($ a_{1}, \gamma_{1}, a_{2}, \gamma_{2}, \cdots , \gamma_{m-1}, a_{m} )$ in $ S_{i}^{*}\Gamma $. So we exclude 
the brackets in writing a string of length one.
\end{remark}

A vital property of free products is the following.
\begin{propo} \label{propo 1}
Let $ \Delta = S_{i}^{*}\Gamma $ be the free-$ \Gamma $-product of two $ \Gamma $-semigroups over the same $ \Gamma $. Then for each $ i \in \{1,2 \} $ there exists a $ \Gamma $-monomorphism $ \theta_{i}: (S_{i},\Gamma) \to \Delta $ given by, $\theta_{i}(a \gamma b)=(a \gamma b)  $ associating the elements $a,b \in S_{1} \cup S_{2}$ and $\gamma \in \Gamma $ with the one letter word $(a \gamma b)$. If $T$ is a $ \Gamma $-semigroup over the same $ \Gamma $ for which there is a $ \Gamma $-morphism $\psi_{i}:S_{i} \to T $ for each $i=1,2$ then there is a unique morphism $\lambda: \Delta \to T$ with the property that the diagram  
 $$ \begin{tikzcd}
S_{i} \arrow[r,"\theta_{i}"] \arrow [d,"\psi_{i}"]  & \Delta \arrow[dl, "\lambda"] \\
 T & 
\end{tikzcd} $$ 
commutes for every $i=1,2$.
 \end{propo}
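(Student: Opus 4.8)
The plan is to regard $\Delta = S_{i}^{*}\Gamma$ as the set of \emph{reduced} words --- those strings $(a_{1},\gamma_{1},\dots,\gamma_{m-1},a_{m})$ with $\zeta(a_{n})\neq\zeta(a_{n+1})$ for every $n$ --- so that each element of $\Delta$ has a unique such expression and maps out of $\Delta$ can be prescribed letter-by-letter with no coherence condition to check. For the first assertion I would set $\theta_{i}(x)=(x)$, the one-letter word, for $x\in S_{i}$; in particular $\theta_{i}(a\gamma b)=(a\gamma b)$. Injectivity is immediate since distinct elements of $S_{i}$ give distinct one-letter words. That $\theta_{i}$ respects the $\Gamma$-operation is exactly the first clause of the multiplication rule: for $a,b\in S_{i}$ and $\gamma\in\Gamma$ we have $\zeta(a)=\zeta(b)=i$, hence $(a)\gamma(b)=(a\gamma b)$, i.e. $\theta_{i}(a)\gamma\theta_{i}(b)=\theta_{i}(a\gamma b)$.

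For the universal property I would define $\lambda:\Delta\to T$ by
\[
\lambda(a_{1},\gamma_{1},a_{2},\dots,\gamma_{m-1},a_{m})
=\psi_{\zeta(a_{1})}(a_{1})\,\gamma_{1}\,\psi_{\zeta(a_{2})}(a_{2})\,\gamma_{2}\cdots\gamma_{m-1}\,\psi_{\zeta(a_{m})}(a_{m}),
\]
the right-hand side being a well-formed product in the $\Gamma$-semigroup $T$ (over the same $\Gamma$), whose value is unambiguous by associativity of the $\Gamma$-operation in $T$. Since the word on the left is reduced, the data $a_{1},\gamma_{1},\dots,a_{m}$ are uniquely determined, so $\lambda$ is well-defined. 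On a one-letter word $(x)$ with $x\in S_{i}$ we get $\lambda(\theta_{i}(x))=\lambda((x))=\psi_{\zeta(x)}(x)=\psi_{i}(x)$, so the triangle commutes for $i=1,2$.

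Next I would check that $\lambda$ is a $\Gamma$-morphism, splitting into the two cases of the multiplication rule for $\mathbf{a}\gamma\mathbf{b}$. If $\zeta(a_{m})\neq\zeta(b_{1})$, then $\mathbf{a}\gamma\mathbf{b}$ is the reduced concatenation $(a_{1},\dots,a_{m},\gamma,b_{1},\dots,b_{n})$, and applying the definition of $\lambda$ and regrouping by associativity in $T$ gives $\lambda(\mathbf{a}\gamma\mathbf{b})=\lambda(\mathbf{a})\,\gamma\,\lambda(\mathbf{b})$. If $\zeta(a_{m})=\zeta(b_{1})=i$, then $\mathbf{a}\gamma\mathbf{b}=(a_{1},\dots,\gamma_{m-1},a_{m}\gamma b_{1},\alpha_{1},b_{2},\dots,b_{n})$, which is again reduced since $\zeta(a_{m-1})\neq i\neq\zeta(b_{2})$; here the key point is that $\psi_{i}$ is a $\Gamma$-morphism, so $\psi_{i}(a_{m}\gamma b_{1})=\psi_{i}(a_{m})\,\gamma\,\psi_{i}(b_{1})$, and substituting this and regrouping in $T$ yields $\lambda(\mathbf{a})\,\gamma\,\lambda(\mathbf{b})$ once more. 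The short-word boundary cases $m=1$ or $n=1$ are handled the same way. Finally, for uniqueness, the Remark gives $(a_{1},\gamma_{1},\dots,a_{m})=(a_{1})\gamma_{1}(a_{2})\gamma_{2}\cdots\gamma_{m-1}(a_{m})=\theta_{\zeta(a_{1})}(a_{1})\,\gamma_{1}\cdots\gamma_{m-1}\,\theta_{\zeta(a_{m})}(a_{m})$ in $\Delta$, so any $\Gamma$-morphism $\lambda'$ with $\lambda'\theta_{i}=\psi_{i}$ must agree with $\lambda$ on every word; hence $\lambda$ is unique.

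The step I expect to require the most care is the verification that $\lambda$ is a homomorphism in the merging case $\zeta(a_{m})=\zeta(b_{1})$: one must confirm the concatenated word is genuinely reduced, must invoke precisely the hypothesis that $\psi_{i}$ preserves the $\Gamma$-operation, and must treat the short-word cases carefully. This is bookkeeping rather than a deep obstacle, but it is exactly where the argument could go wrong if the reduced-word convention on $\Delta$ is not fixed at the outset.
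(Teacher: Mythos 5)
Your proposal is correct and follows essentially the same route as the paper: define $\lambda$ letter-by-letter via the $\psi_{\zeta(a_k)}$, verify the homomorphism property by cases on whether the adjacent letters merge, read off commutativity on one-letter words, and derive uniqueness from the fact that $\Delta$ is generated by words of length one. In fact you are slightly more complete than the paper, which only writes out the non-merging case $\zeta(a_m)\neq\zeta(b_1)$ and does not explicitly check injectivity of $\theta_i$; your treatment of the merging case (using that $\psi_i$ preserves the $\Gamma$-operation and that the resulting word is still reduced) is exactly the bookkeeping the paper leaves implicit.
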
 

\begin{proof}
 Since each element $a \gamma b$ in $S_{1} \cup S_{2}$ is also a word $(a \gamma b)$ in $ \Gamma $, we can consider 
 $S_{1} \cup S_{2}$ as a $ \Gamma $-subsemigroup of $ \Delta $. Given $T$ and $ \Gamma $-morohisms $ \psi_{i} $ are given $(i=1,2)$, define $ \lambda : \Delta  \to T $ by, 
 $$ \lambda((a_{1}, \gamma_{1}, a_{2}, \gamma_{2}, \cdots , \gamma_{m-1}, a_{m} ))=\psi_{\zeta(a_{1})}(a_{1}), \gamma_{1},\psi_{\zeta(a_{2})}(a_{2}) ,\gamma_{2}, \cdots, \gamma_{m-1}, \psi_{\zeta(a_{m})}(a_{m}) $$.

The expression on the right side is the product of  elements of $T$, hence $ \lambda $ maps $ \Delta $ into $T$. 
Further $ \lambda $ is a $ \Gamma $-morphism, for  
 \begin{eqnarray*}
 \textbf{a} &= &(a_{1}, \gamma_{1}, a_{2}, \gamma_{2}, \cdots , \gamma_{m-1}, a_{m} )\\
 \textbf{b} &=& (b_{1}, \alpha_{1}, b_{2}, \alpha_{2}, \cdots , \alpha_{n-1}, b_{n} ) \\ 
 \end{eqnarray*} elements of $ \Delta $ and $ \gamma \in \Gamma $, then if $\zeta(a_{m}) \neq \zeta(b_{1})$, 
 
 \begin{eqnarray*}
 \lambda(a \gamma b) & = & \lambda((a_{1}, \gamma_{1}, a_{2}, \gamma_{2}, \cdots , \gamma_{m-1}, a_{m} , \gamma , b_{1}, \alpha_{1}, b_{2}, \alpha_{2}, \cdots , \alpha_{n-1}, b_{n} )) \\
 & = & \psi_{\zeta(a_{1})}(a_{1}) ,\gamma_{1}, \cdots,\gamma , \psi_{\zeta(b_{1})}(b_{1}), \alpha_{1}, \cdots \psi_{\zeta(b_{n})}(b_{n}) \\
 & = & [ \psi_{\zeta(a_{1})}(a_{1}) ,\gamma_{1}, \cdots,\gamma_{m-1}, a_{m} )]\gamma [ \psi_{\zeta(b_{1})}(b_{1}), \alpha_{1}, \cdots ,\alpha_{n-1},\psi_{\zeta(b_{n})}(b_{n}) \\
 & = & \lambda(a) \gamma \lambda(b)
 \end{eqnarray*}
 The above diagram commutes since 
 \begin{eqnarray*}
 \lambda \theta_{i}(a \gamma b) & =  & \lambda (\theta_{i}(a \gamma b)) \\
 & = & \lambda ((a \gamma b)) \\
 & = & \psi_{\zeta(a)}(a) \gamma \psi_{\zeta(b)}(b) \\
 & = & \psi_{i}(a \gamma b) 
 \end{eqnarray*}
 hence $\lambda \theta_{i}= \psi_{i}$.
 
 The uniqueness of the $ \lambda $ follows from the way that $ \Delta $ is created by words of length one. If $ \lambda $ is to make the diagram commute then it is compulsory to have $ \psi_{i}(a \gamma b)= \lambda ((a \gamma b)) $ for every $i=1,2$. Then, if $ \lambda $ is to be a $ \Gamma $- homomorphism we must have 
 \begin{eqnarray*}
 \lambda(a_{1}, \gamma_{1}, a_{2}, \gamma_{2}, \cdots , \gamma_{m-1}, a_{m}) & = & \lambda(a_{1}), \gamma_{1},\lambda( a_{2}), \gamma_{2}, \cdots , \gamma_{m-1}, \lambda(a_{m}) \\
 & = & \psi_{\zeta(a_{1})}(a_{1})\gamma_{1}\psi_{\zeta(a_{2})}(a_{2})\gamma_{2} \cdots \gamma_{m-1} \psi_{\zeta(a_{m})}(a_{m})
 \end{eqnarray*}
 for every $ a_{1}, \gamma_{1}, a_{2}, \gamma_{2}, \cdots , \gamma_{m-1}, a_{m}$ in $ \Delta $. \\
 ie., $ \Delta $ must be exactly what we defined it.
 \end{proof}

 \begin{propo}
Let  $S_{1}$ and $ S_{2} $ be two $ \Gamma $-semigroups , $\Delta $ be the free product of $S_{1}$ and $ S_{2} $ and let $H$ be a $ \Gamma $-semigroup such that 
   \begin{itemize}
 \item there exists a $ \Gamma $-monomorphism $f_{i}:S_{i} \to H$ for $i=1,2$. 
 \item if $T$ is a $ \Gamma $-semigroup  and if there exist $ \Gamma $-monomorphism $g_{i}: S_{i} \to T$ for $i=1,2$ then there exists a unique  $ \Gamma $-monomorphism $\delta  : H \to T$ such that the diagram
 $$ \begin{tikzcd} 
S_{i} \arrow[r,"f_{i}"] \arrow [d,"g_{i}"]  & H \arrow[dl, "\delta"] \\
 T & 
\end{tikzcd}  $$ \\
commutes for every $i=1,2$. 
 \end{itemize}
 then $H$ is isomorphic to $ \Delta  $.
 
 \end{propo}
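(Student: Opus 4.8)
The plan is to run the standard ``object defined by a universal property is unique up to isomorphism'' argument, using Proposition~\ref{propo 1} as the known universal property of $\Delta$ and the hypothesis on $H$ as a second, a~priori different, universal property. First I would apply Proposition~\ref{propo 1} with target $\Gamma$-semigroup $T=H$ and the $\Gamma$-morphisms taken to be the given monomorphisms $f_{i}:S_{i}\to H$ (a monomorphism is in particular a morphism). This yields a unique $\Gamma$-morphism $\lambda:\Delta\to H$ with $\lambda\theta_{i}=f_{i}$ for $i=1,2$, where $\theta_{i}:S_{i}\to\Delta$ are the canonical $\Gamma$-monomorphisms of Proposition~\ref{propo 1}. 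Next I would apply the hypothesis on $H$ with target $T=\Delta$ and the monomorphisms taken to be $\theta_{i}:S_{i}\to\Delta$; this produces a unique $\Gamma$-monomorphism $\delta:H\to\Delta$ with $\delta f_{i}=\theta_{i}$ for $i=1,2$.

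Then I would compose in both directions. The composite $\delta\lambda:\Delta\to\Delta$ satisfies $(\delta\lambda)\theta_{i}=\delta(\lambda\theta_{i})=\delta f_{i}=\theta_{i}$, so it is a $\Gamma$-morphism $\Delta\to\Delta$ restricting to $\theta_{i}$ on each $S_{i}$; the identity map $\mathrm{id}_{\Delta}$ has the same property, so the uniqueness clause of Proposition~\ref{propo 1} (applied with $T=\Delta$, $\psi_{i}=\theta_{i}$) forces $\delta\lambda=\mathrm{id}_{\Delta}$. In particular $\lambda$ has a left inverse, hence is injective, so $\lambda$ is itself a $\Gamma$-monomorphism. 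For the other order, $\lambda\delta:H\to H$ satisfies $(\lambda\delta)f_{i}=\lambda(\delta f_{i})=\lambda\theta_{i}=f_{i}$. One can finish either by noting $\delta\circ(\lambda\delta)=(\delta\lambda)\circ\delta=\delta=\delta\circ\mathrm{id}_{H}$ and cancelling the injective $\delta$ on the left, or by invoking the uniqueness clause in the hypothesis on $H$ (with $T=H$, $g_{i}=f_{i}$), since both $\lambda\delta$ and $\mathrm{id}_{H}$ are $\Gamma$-monomorphisms $H\to H$ composing with each $f_{i}$ to give $f_{i}$. Either route gives $\lambda\delta=\mathrm{id}_{H}$, so $\lambda$ and $\delta$ are mutually inverse $\Gamma$-isomorphisms and $H\cong\Delta$.

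I do not expect a genuine obstacle here: the content is entirely formal. The only points requiring a moment's care are bookkeeping ones — checking that $\mathrm{id}_{\Delta}$ and $\mathrm{id}_{H}$ are $\Gamma$-monomorphisms (immediate) and that composites of $\Gamma$-morphisms are again $\Gamma$-morphisms, so that the two uniqueness statements are legitimately applicable to the composites $\delta\lambda$ and $\lambda\delta$. If one wishes to keep the $\Gamma$-components explicit (writing each map as a pair $(\,\cdot\,',\,\cdot\,'')$ acting on the semigroup and on $\Gamma$ respectively), the same argument goes through componentwise without change, because composition and both universal properties respect the $\Gamma$-component.
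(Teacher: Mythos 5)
Your proposal is correct and follows essentially the same route as the paper: construct $\lambda:\Delta\to H$ from Proposition~\ref{propo 1} with $T=H$, $\psi_i=f_i$, construct $\delta:H\to\Delta$ from the hypothesis on $H$ with $T=\Delta$, $g_i=\theta_i$, and use the two uniqueness clauses to force $\delta\lambda=1_{\Delta}$ and $\lambda\delta=1_{H}$. Your extra care about whether the composites qualify as ($\Gamma$-mono)morphisms before invoking uniqueness, and the alternative cancellation of the injective $\delta$, are welcome refinements but do not change the argument.
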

\begin{proof}

From the property of $ \Delta $  in Proposition \ref{propo 1}, when $ T = \Delta $ and $ \psi_{i}= \phi_{i} $ for $ i=1,2 $ there is a unique $ \Gamma $-morphism (identity map $ 1_{\Delta} $) $ \Delta \to \Delta $ making the diagram 
 $$ \begin{tikzcd} 
S_{i} \arrow[r] \arrow [d,"\theta_{i}"]  & \Delta \arrow[dl] \\
 \Delta & 
\end{tikzcd}  $$ \\
commutes for every $ i=1,2 $.

By uniqueness  $ 1_{\Delta} $ is the only $ \Gamma $-morphism from $ \Delta $ into $ \Delta $ having the property. Similarly the identity map $ 1_{H} $ is the only $ \Gamma $-morphism from $H$ into $H$ with the property that the diagram 
 $$ \begin{tikzcd} 
S_{i} \arrow[r,"f_{i}"] \arrow [d,"f_{i}"]  & H \arrow[dl] \\
 H & 
\end{tikzcd}  $$ \\
commutes for every $i=1,2$. 

Now apply Proposition \ref{propo 1} with $T=H$ and $ \psi_{i}=f_{i} $ for $i=1,2$ then we have  a morphism $ \lambda $ from $ \Delta $  to $H$ such that  
\begin{center}
\begin{equation}  \label{d1}
\begin{tikzcd} 
S_{i} \arrow[r,"\theta_{i}"] \arrow [d,"f_{i}"]  & \Delta \arrow[dl, "\lambda"] \\
 H & 
\end{tikzcd}
\end{equation}
\end{center}  
commutes for every $i=1,2$. 

Then by the assumed property of $H$ with $T= \Delta $ and $g_{i} = \theta_{i} $ we obtain a $ \Gamma $-morphism $ \delta : H \to \Delta $ such that the diagram 
\begin{center}
\begin{equation}  \label{d2}
\begin{tikzcd} 
S_{i} \arrow[r,"f_{i}"] \arrow [d,"\theta_{i}"]  & H \arrow[dl, "\delta"] \\
 \Delta & 
\end{tikzcd}
\end{equation}
\end{center}  
commutes for every $i=1,2$.

It follows that 
\begin{center}
$ \delta \lambda \theta_{i} = \delta f_{i} = \theta_{i} $\\
$ \lambda \delta f_{i} = \lambda \theta_{i} = f_{i} $
\end{center}
that is, if we track together, the dragrams \ref{d1} and \ref{d2} in both of the possible ways we obtain commutative diagrams 
\begin{center}
\begin{equation*}  
\begin{tikzcd} 
S_{i} \arrow[r,"\theta_{i}"] \arrow [d,"\theta_{i}"]  & \Delta \arrow[dl, "\delta \lambda"] \\
 \Delta & 
\end{tikzcd} 
\begin{tikzcd} 
S_{i} \arrow[r,"f_{i}"] \arrow [d,"f_{i}"]  & H \arrow[dl, "\lambda \delta"] \\
 H & 
\end{tikzcd}
\end{equation*}
for $i=1,2$
\end{center} 
by the uniqueness , we get, 
\begin{center}
$ \delta \lambda = 1_{\Delta} $ and $ \lambda \delta = 1_{H}$
\end{center}
thus $ \delta $ and $ \lambda $ are mutually inverse $ \Gamma $-isomorphisms and $ H \simeq \Delta $ as required.
 \end{proof}
We denote by \textbf{$ \Gamma $ -Sgrp} the category of $ \Gamma $-semigroups which has the $ \Gamma $-semigroups as objects and the $ \Gamma $-homomorphisms of $ \Gamma $-semigroups as arrows.

Then by the above two propositions we can conclude that $ \Delta $ is the unique coproduct in the sense of category theory.

\section{Congruences and Free $\Gamma $ product}

In this section we discuss certain congruences on $ \Gamma $-semigroups.
\begin{defn}(cf.\cite{SETH})
Let S be a $ \Gamma $-semigroup. An equivalence relation $ \rho $ on $S$ is called congruence if $ x \rho y$ implies that $(x \gamma z) \rho (y \gamma z)$ and  $(z \gamma  x \rho (z \gamma y)$ for all $x,y,z \in S$ and $\gamma \in \Gamma$.
\end{defn}

Let $\rho $ be a congruence relation on $(S,\Gamma )$. Then $S / \rho = \{ \rho (x) : x \in S  \}$ is the set of all equivalence classes of elements of $S$ with respect to $ \rho $.
\begin{thm} (cf.\cite{CRAM})
Let $ \rho $ be a congruence relation on $ \Gamma $-semigroup $(S,\Gamma )$. Then $S / \rho $ is a $ \Gamma $-semigroup.
\end{thm}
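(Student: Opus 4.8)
The plan is to show directly that the natural operation induced on the quotient set $S/\rho$ is well-defined and satisfies the $\Gamma$-semigroup associativity axiom, so that no appeal beyond the definitions is required. Concretely, for $\rho(x),\rho(y)\in S/\rho$ and $\gamma\in\Gamma$ I would set $\rho(x)\,\gamma\,\rho(y):=\rho(x\gamma y)$, using the same indexing set $\Gamma$ for the quotient, and the first step is to verify this prescription does not depend on the chosen representatives. This is exactly where the congruence hypothesis enters: if $x\rho x'$ and $y\rho y'$, then applying the right-compatibility $x\rho x'\Rightarrow (x\gamma y)\rho(x'\gamma y)$ and the left-compatibility $y\rho y'\Rightarrow (x'\gamma y)\rho(x'\gamma y')$, and then transitivity of $\rho$, gives $(x\gamma y)\rho(x'\gamma y')$, i.e. $\rho(x\gamma y)=\rho(x'\gamma y')$. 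Hence the map $(S/\rho)\times\Gamma\times(S/\rho)\to S/\rho$ is genuinely defined.

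Next I would check the defining identity $(\bar a\,\gamma\,\bar b)\,\mu\,\bar c=\bar a\,\gamma\,(\bar b\,\mu\,\bar c)$ for all $\bar a,\bar b,\bar c\in S/\rho$ and $\gamma,\mu\in\Gamma$. Picking representatives $a,b,c\in S$ and unwinding the definition of the induced operation twice on each side, the left side equals $\rho\big((a\gamma b)\mu c\big)$ and the right side equals $\rho\big(a\gamma(b\mu c)\big)$; since $S$ itself is a $\Gamma$-semigroup we have $(a\gamma b)\mu c=a\gamma(b\mu c)$ in $S$, so the two classes coincide. That establishes that $S/\rho$ together with $\Gamma$ and this induced ternary map is a $\Gamma$-semigroup. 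For completeness I would also remark that the quotient map $x\mapsto\rho(x)$ is a $\Gamma$-semigroup homomorphism by construction, since $\rho(a\gamma b)=\rho(a)\,\gamma\,\rho(b)$ holds by definition.

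There is no serious obstacle here; the only point requiring any care is the well-definedness step, and even that is a two-line chain of implications using left- and right-compatibility together with transitivity. The argument is the verbatim $\Gamma$-analogue of the classical fact that a semigroup modulo a congruence is a semigroup, so I would keep the write-up short, emphasising only that both one-sided compatibility conditions in the definition of a $\Gamma$-congruence are needed (one would not suffice, since in the ternary operation $a\gamma b$ both arguments can vary independently).
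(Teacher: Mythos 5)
Your proof is correct and is the standard quotient construction: well-definedness of $\rho(x)\,\gamma\,\rho(y):=\rho(x\gamma y)$ via the two one-sided compatibility conditions plus transitivity, followed by associativity inherited from $S$. The paper itself states this theorem without proof (it is cited from Chinram and Tinpun), and your argument is exactly the expected one, so there is nothing to compare beyond noting that your write-up correctly supplies the omitted details.
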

\begin{thm}(cf.\cite{HED})
Let $(\phi , g): (S_{1},\Gamma_{1}) \to (S_{2},\Gamma_{2})$ be a homomorphism. Define the relation $ \rho_{(\phi , g)} $ on $ (S_{1},\Gamma_{1}) $ as follows:
$$ x \rho_{(\phi , g)} y \Longleftrightarrow \phi(x) = \phi(y). $$ Then  $ \rho_{(\phi , g)} $ is a congruence on $ (S_{1},\Gamma_{1}) $.
\end{thm}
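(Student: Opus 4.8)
The plan is to verify directly the two defining requirements of a congruence given in the definition just above: that $\rho_{(\phi,g)}$ is an equivalence relation on $S_1$, and that it respects the ternary operation of $S_1$ on both sides. In effect this is the classical statement ``the kernel of a homomorphism is a congruence'' transcribed into the $\Gamma$-setting.

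First I would check that $\rho_{(\phi,g)}$ is an equivalence relation. This is immediate, since $\rho_{(\phi,g)}$ is the pullback along $\phi$ of the equality relation on $S_2$: reflexivity follows from $\phi(x)=\phi(x)$, symmetry from the symmetry of equality in $S_2$, and transitivity from its transitivity. No $\Gamma$-structure is needed at this stage.

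Next I would verify the substitution property. Recall that a homomorphism $(\phi,g):(S_1,\Gamma_1)\to(S_2,\Gamma_2)$ satisfies $\phi(a\gamma b)=\phi(a)\,g(\gamma)\,\phi(b)$ for all $a,b\in S_1$ and $\gamma\in\Gamma_1$. Suppose $x\,\rho_{(\phi,g)}\,y$, that is $\phi(x)=\phi(y)$. Then for any $z\in S_1$ and $\gamma\in\Gamma_1$ one computes $\phi(x\gamma z)=\phi(x)\,g(\gamma)\,\phi(z)=\phi(y)\,g(\gamma)\,\phi(z)=\phi(y\gamma z)$, whence $(x\gamma z)\,\rho_{(\phi,g)}\,(y\gamma z)$; symmetrically $\phi(z\gamma x)=\phi(z)\,g(\gamma)\,\phi(x)=\phi(z)\,g(\gamma)\,\phi(y)=\phi(z\gamma y)$, whence $(z\gamma x)\,\rho_{(\phi,g)}\,(z\gamma y)$. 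This is precisely the condition required in the definition of congruence, so $\rho_{(\phi,g)}$ is a congruence on $(S_1,\Gamma_1)$.

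I do not expect any genuine obstacle; the proof is a routine calculation. The only points deserving care are bookkeeping ones: the relation is defined only on $S_1$ (not on $\Gamma_1$), so the map $g$ on the parameter sets is used merely to give meaning to $\phi(a\gamma b)$ and never needs a compatibility hypothesis of its own; and one should be careful to use the ``paired'' homomorphism property $\phi(a\gamma b)=\phi(a)\,g(\gamma)\,\phi(b)$ appropriate when $\Gamma_1\neq\Gamma_2$, rather than the single-parameter version $f(a\gamma b)=f(a)\gamma f(b)$ recalled earlier in the Preliminaries.
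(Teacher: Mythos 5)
Your proof is correct and is the standard ``kernel of a homomorphism is a congruence'' argument carried over to the $\Gamma$-setting; the paper itself states this result only as a citation to \cite{HED} and gives no proof, so there is nothing to diverge from. Your two parenthetical cautions — that the relation lives only on $S_{1}$ and that one must use the paired form $\phi(a\gamma b)=\phi(a)\,g(\gamma)\,\phi(b)$ when $\Gamma_{1}\neq\Gamma_{2}$ — are exactly the right points to flag, since the Preliminaries only record the single-$\Gamma$ version of a homomorphism.
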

\begin{thm} (cf.\cite{CRAM}) \label{thm3}
Let $S$ and $T$ be $ \Gamma $-semigroups under the same $ \Gamma $ and $ \phi: S \to T $ be a $ \Gamma $-homomorphism. Then there is a $ \Gamma $-homomorphism $\psi : s / ker\phi \to T $ such that $im \phi = im \psi $ and the diagram 
$$ \begin{tikzcd} 
S \arrow[r,"\phi"] \arrow [d,"(ker \phi )^{\#}"]  & T \arrow[dl, "\psi"] \\
 S/ker \phi & 
\end{tikzcd} $$  commutes where $ (ker \phi )^{\#} $ is the natural mapping from $S$ into $S / ker\phi$ defined by $ (ker \phi )^{\#} (x) = x ker \phi $ for all $x \in S $.
\end{thm}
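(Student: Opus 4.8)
The plan is to reproduce the classical fundamental homomorphism theorem, now in the $\Gamma$-setting, by transporting $\phi$ through the quotient map. First I would record the structural facts already available: since $S$ and $T$ are $\Gamma$-semigroups over the same $\Gamma$, the relation $\ker\phi$ (that is, $\rho_{(\phi,\mathrm{id}_\Gamma)}$, with $x\mathrel{\ker\phi}y \iff \phi(x)=\phi(y)$) is a congruence on $(S,\Gamma)$ by the congruence theorem recalled above, so $S/\ker\phi$ is again a $\Gamma$-semigroup over the same $\Gamma$ by the quotient theorem; moreover the natural map $(\ker\phi)^{\#}\colon S\to S/\ker\phi$, $x\mapsto x\ker\phi$, is a surjective $\Gamma$-homomorphism.

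Next I would define $\psi\colon S/\ker\phi\to T$ by $\psi(x\ker\phi)=\phi(x)$ and check that this is well defined: if $x\ker\phi=y\ker\phi$ then $x\mathrel{\ker\phi}y$, which by the very definition of $\ker\phi$ gives $\phi(x)=\phi(y)$, so the prescription does not depend on the chosen representative. This single verification is the only point that requires any thought; everything that follows is formal.

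Then I would check that $\psi$ is a $\Gamma$-homomorphism. For $x,y\in S$ and $\gamma\in\Gamma$, using that $(\ker\phi)^{\#}$ and $\phi$ are $\Gamma$-homomorphisms,
\[
\psi\bigl((x\ker\phi)\,\gamma\,(y\ker\phi)\bigr)=\psi\bigl((x\gamma y)\ker\phi\bigr)=\phi(x\gamma y)=\phi(x)\,\gamma\,\phi(y)=\psi(x\ker\phi)\,\gamma\,\psi(y\ker\phi),
\]
and $\psi$ carries the class of the left identity $e$ of $S$ to $\phi(e)=e'$, the left identity of $T$, as the definition of a $\Gamma$-homomorphism requires. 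Commutativity of the triangle is then immediate, since $\bigl(\psi\circ(\ker\phi)^{\#}\bigr)(x)=\psi(x\ker\phi)=\phi(x)$ for every $x\in S$, so $\psi\circ(\ker\phi)^{\#}=\phi$. Finally $\operatorname{im}\psi=\{\psi(x\ker\phi):x\in S\}=\{\phi(x):x\in S\}=\operatorname{im}\phi$, using that $(\ker\phi)^{\#}$ is onto.

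I expect no genuine obstacle: the argument is the $\Gamma$-analogue of the corresponding semigroup result in \cite{HOW}, and the only substantive ingredient, well-definedness of $\psi$, is essentially a restatement of the way $\ker\phi$ was constructed. The mild care needed is to keep track of the left-identity clause in the paper's definition of a $\Gamma$-homomorphism and to note that passing to the quotient leaves the parameter set $\Gamma$ unchanged, so that $\psi$ really is a morphism of $\Gamma$-semigroups over the same $\Gamma$.
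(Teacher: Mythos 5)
Your proof is correct. The paper itself gives no proof of this theorem --- it is quoted from the reference [CRAM] (Chinram and Tinpun) --- and your argument (define $\psi(x\ker\phi)=\phi(x)$, check well-definedness from the definition of $\ker\phi$, then verify the homomorphism property, commutativity, and equality of images) is exactly the standard factorization argument that the cited source uses, so there is nothing to add.
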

\begin{corol}
$ S / ker \phi \cong im \phi .$
\end{corol}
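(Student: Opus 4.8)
The plan is to extract the isomorphism directly from Theorem \ref{thm3}. That theorem already supplies a $ \Gamma $-homomorphism $ \psi : S/ker \phi \to T $ satisfying $ \psi \circ (ker \phi)^{\#} = \phi $ and $ im \psi = im \phi $; what remains is to verify that $ \psi $, regarded as a map onto $ im \phi $, is a $ \Gamma $-isomorphism. First I would observe that $ im \phi = im \psi $ is a $ \Gamma $-subsemigroup of $ T $, being the image of a $ \Gamma $-homomorphism, so the corestriction $ \psi : S/ker \phi \to im \phi $ is a well-defined surjective $ \Gamma $-homomorphism. Thus the only substantive point is injectivity.

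To prove injectivity, suppose $ \psi(x \, ker \phi) = \psi(y \, ker \phi) $ for $ x,y \in S $. Using commutativity of the diagram in Theorem \ref{thm3},
$$ \phi(x) = \psi\big((ker \phi)^{\#}(x)\big) = \psi(x \, ker \phi) = \psi(y \, ker \phi) = \psi\big((ker \phi)^{\#}(y)\big) = \phi(y). $$
By the definition of $ ker \phi $ as the congruence $ \rho_{(\phi,g)} $ introduced earlier, the equality $ \phi(x) = \phi(y) $ forces $ x \, ker \phi \, y $, that is $ x \, ker \phi = y \, ker \phi $. Hence $ \psi $ is injective.

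Combining the two observations, $ \psi : S/ker \phi \to im \phi $ is a bijective $ \Gamma $-homomorphism, and its set-theoretic inverse is again a $ \Gamma $-homomorphism: from $ \psi(\bar{x} \gamma \bar{y}) = \psi(\bar{x}) \gamma \psi(\bar{y}) $ together with bijectivity one reads off $ \psi^{-1}(a \gamma b) = \psi^{-1}(a) \gamma \psi^{-1}(b) $. Therefore $ \psi $ is a $ \Gamma $-isomorphism and $ S/ker \phi \cong im \phi $. I do not expect any genuine obstacle here; the corollary is in essence a repackaging of Theorem \ref{thm3} once one notices that the map $ \psi $ produced there is automatically injective because of the way $ ker \phi $ is defined. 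The only step needing a word of care is checking that $ im \phi $ carries a $ \Gamma $-semigroup structure inherited from $ T $, which is immediate from $ \phi $ being a $ \Gamma $-homomorphism.
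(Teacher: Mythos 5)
Your proposal is correct: the paper states this corollary without proof as an immediate consequence of Theorem \ref{thm3} (citing \cite{CRAM}), and your argument is exactly the standard one it leaves implicit --- corestrict the $\psi$ of Theorem \ref{thm3} to $im\,\phi$, get surjectivity from $im\,\psi = im\,\phi$, and get injectivity from $\psi\circ(ker\,\phi)^{\#}=\phi$ together with the definition of $ker\,\phi$ as the congruence $x\,\rho\,y \Leftrightarrow \phi(x)=\phi(y)$. No gaps; this is precisely the intended justification.
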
 
\subsection*{Free $ \Gamma $-product of amalgam }
Free $ \Gamma $- product   $\textsc{A}=[(U,\Gamma_{0}),(S_{1},\Gamma_{1}),(S_{2},\Gamma_{2}),f_{1},f_{2}]$ of the amalgam writted as $ \Delta_{U} $ is defined as the quotient semigroup of the ordinary free $ \Gamma $- product $ \Delta $ in which for each $i = 1,2 $; the image $ f_{1}(u\gamma_{0}u') $ of an element $ u\gamma_{0}u' $ of $ (U,\Gamma_{0}) $ in $ (S_{1},\Gamma_{1}) $ is identified with its image $ f_{2}(u\gamma_{0}u') $ in $ (S_{2},\Gamma_{2}) $.

More precisely, denote $ \theta_{i} $ the natural $ \Gamma $-monomorphism  from $S_{i}$ to $\Delta = S_{i}^{*}\Gamma $,  then we define $ \Delta_{U}= S_{i}^{*}\Gamma /\rho $ where $ \rho $ is the congruence on $ \Delta $ generated by the subset 
\begin{equation} \label{eqn}
\textbf{R}= \{ (\theta_{1} f_{1}(u\gamma_{0}u'),\theta_{2} f_{2}(u\gamma_{0}u')): u\gamma_{0}u' \in U \}
\end{equation}   
 of $ \Delta \times \Delta$.

It is clear that for each $i=1,2$ there is a morphism $ \mu_{i}=\rho^{\#}\theta_{i} $ from $ S_{i} \to \Delta_{U} $. It is also clear from the definition of $ \rho $ that we have a commutative diagram 
$$
\begin{tikzcd} 
U \arrow[r,"f_{1}"] \arrow [d,"f_{2}"]  & S_{1} \arrow[d, "\mu_{1}"] \\
S_{2} \arrow[r,"\mu_{1}"] & \Delta_{U}
\end{tikzcd} 
$$
So there exists a  $ \Gamma $-morphism $\mu : U \to \Delta_{U}$ such that $\mu = \mu_{1}f1=\mu_{2}f_{2}$.\\
Since $\mu(U) \leqslant \mu_{i}(S_{i})$, we necessarily have $\mu(U) \leqslant \mu_{1}(S_{1}) \cap \mu_{2}(S_{2})  $. \\
Hence the $ \Gamma $-amalgam is embedded in its free $ \Gamma $ product if and only if 
\begin{itemize}
\item each $ \mu_{i} $ is one-one  
\item $ \mu_{1}(S_{1}) \cap \mu_{2}(S_{2}) \leqslant \mu(U) $
\end{itemize}
If the above two conditions hold, we say that the amalgam is naturally $ \Gamma $-embedded in its free $ \Gamma $ product.
\begin{thm}
The $ \Gamma $-semigroup amalgam $\textsc{A}=[(U,\Gamma_{0}),(S_{1},\Gamma_{1}),(S_{2},\Gamma_{2}),f_{1},f_{2}]$ is embeddable in a $ \Gamma $-semigroup if and only if it is naturally $ \Gamma $-embedded in its free $ \Gamma $ product.
\end{thm}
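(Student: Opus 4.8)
The plan is to mirror the classical Howie argument (\cite{HOW}, Theorem 8.2.4) in the $\Gamma$-setting. One direction is immediate: if the amalgam is naturally $\Gamma$-embedded in its free $\Gamma$-product $\Delta_U$, then $\Delta_U$ itself is a $\Gamma$-semigroup in which the amalgam embeds, since by construction we have the commuting square through $\mu_i=\rho^{\#}\theta_i$ and $\mu=\mu_1 f_1=\mu_2 f_2$, the maps $\mu_i$ are one-one by hypothesis, and $\mu_1(S_1)\cap\mu_2(S_2)\leqslant\mu(U)$ together with the already-observed reverse inclusion $\mu(U)\leqslant\mu_1(S_1)\cap\mu_2(S_2)$ gives equality, i.e.\ condition [2]; condition [1] is the commuting square. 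So this direction only requires assembling facts already recorded above.

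For the converse, suppose the amalgam embeds in some $\Gamma$-semigroup $(T,\Gamma_T)$ via $\Gamma$-monomorphisms $g,g_1,g_2$ satisfying [1] and [2]. First I would use the universal property of the ordinary free $\Gamma$-product $\Delta=S_i^{*}\Gamma$ (Proposition \ref{propo 1}): the $\Gamma$-morphisms $\psi_i=g_i:S_i\to T$ induce a unique $\Gamma$-morphism $\lambda:\Delta\to T$ with $\lambda\theta_i=g_i$. Next I would check that $\lambda$ kills the generating set $\mathbf{R}$ of the congruence $\rho$ from \eqref{eqn}: for any $u\gamma_0 u'\in U$ we have $\lambda(\theta_1 f_1(u\gamma_0 u'))=g_1 f_1(u\gamma_0 u')=g(u\gamma_0 u')=g_2 f_2(u\gamma_0 u')=\lambda(\theta_2 f_2(u\gamma_0 u'))$, using the commutativity of the square in [1]. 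Since $\rho$ is the congruence generated by $\mathbf{R}$ and $\ker\lambda$ is a congruence containing $\mathbf{R}$, we get $\rho\subseteq\ker\lambda$, so by the factorization theorem (Theorem \ref{thm3} applied to $\lambda$, together with the fact that $\rho\subseteq\ker\lambda$) there is a well-defined $\Gamma$-morphism $\bar\lambda:\Delta_U=\Delta/\rho\to T$ with $\bar\lambda\rho^{\#}=\lambda$, hence $\bar\lambda\mu_i=\bar\lambda\rho^{\#}\theta_i=\lambda\theta_i=g_i$ and likewise $\bar\lambda\mu=g$.

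From $\bar\lambda\mu_i=g_i$ and the injectivity of each $g_i$ it follows at once that each $\mu_i$ is one-one, which is the first of the two conditions for natural $\Gamma$-embedding. For the second, suppose $w\in\mu_1(S_1)\cap\mu_2(S_2)$, say $w=\mu_1(s_1)=\mu_2(s_2)$ with $s_i\in S_i$. Applying $\bar\lambda$ gives $g_1(s_1)=g_2(s_2)$, so $g_1(s_1)\in g_1(S_1)\cap g_2(S_2)=g(U)$ by condition [2]; pick $u\in U$ with $g(u)=g_1(s_1)$. Then $g_1(f_1(u))=g(u)=g_1(s_1)$, and since $g_1$ is injective, $f_1(u)=s_1$, whence $\mu_1(s_1)=\mu_1 f_1(u)=\mu(u)\in\mu(U)$; similarly $s_2=f_2(u)$ is consistent. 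Thus $w\in\mu(U)$, giving $\mu_1(S_1)\cap\mu_2(S_2)\leqslant\mu(U)$, and the amalgam is naturally $\Gamma$-embedded in $\Delta_U$.

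The only real subtlety — and the step I would write out most carefully — is the descent of $\lambda$ through the congruence $\rho$: one must be sure that the congruence generated by $\mathbf{R}$ is contained in $\ker\lambda$, and that the induced map $\bar\lambda$ is genuinely a $\Gamma$-morphism in the two-sorted sense (acting compatibly on both the $S$-part and the $\Gamma$-part). Everything else is a routine transcription of the semigroup argument, the bookkeeping with the $\Gamma$-pointers $\zeta$ being harmless since the morphisms involved respect the decomposition of words. I would also remark that the equivalence can be phrased as: $\Delta_U$ is a "test object'' — the amalgam embeds somewhere if and only if it embeds in $\Delta_U$ — which is the $\Gamma$-analogue of Howie's observation.
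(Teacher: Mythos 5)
Your proposal is correct and follows essentially the same route as the paper: both directions reduce to producing a $\Gamma$-morphism $\Delta_U\to T$ compatible with the $g_i$ and then transferring injectivity and the intersection condition back to the $\mu_i$. The only cosmetic difference is that you re-derive that morphism inline (via the universal property of $\Delta$ plus factorization through the congruence $\rho$), whereas the paper obtains it by citing its pushout result (Proposition \ref{propo3}), whose proof is exactly the argument you wrote out; you also spell out the ``obvious'' direction more carefully than the paper does, which is a small improvement rather than a departure.
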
 

Before moving to the proof, we have to make use of the following result.
\begin{propo} \label{propo3}
If $\textsc{A}$ is a $ \Gamma $ semigroup amalgam, the the free $ \Gamma $ product $ \Delta_{U} $ of the amalgam is the pushout of the diagram $\{ U \to S_{1} \}_{i=1,2}$. That is 
\begin{itemize}
\item[(1)]  There exists for each $i = 1,2 $ a $ \Gamma $-morphism $\mu_{i} : S_{i} \to \Delta_{U}$ such that the diagram $ \{ U \to S_{i} \to \Delta_{U} \}_{i=1,2}$ commutes. That is,  $ \mu_{1}f_{1}=\mu_{2}f_{2} $. 
\item[(2)]  If $V$ is a $ \Gamma $-semigroup for which morphisms $ g_{i}:S_{i} \to V $ exist such that  $ g_{1}f_{1}=g_{2}f_{2} $, then there exists a unique $ \Gamma $- morphism $ \delta :  \Delta_{U} \to V $ such that the diagram 
$$
\begin{tikzcd} 
S_{i} \arrow[r,"\mu_{i}"] \arrow [d,"g_{i}"]  & \Delta_{U} \arrow[dl, "\delta"] \\
V & 
\end{tikzcd} 
$$ commutes for each $i=1,2$.
\end{itemize}
\end{propo}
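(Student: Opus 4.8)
The plan is to derive the pushout property of $\Delta_{U}$ from the universal (coproduct‑type) property of the ordinary free $\Gamma$-product $\Delta = S_{i}^{*}\Gamma$ established in Proposition \ref{propo 1}, together with the facts that the kernel of a $\Gamma$-homomorphism is a congruence and that a homomorphism kills any congruence contained in its kernel (the factorization underlying Theorem \ref{thm3}).

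Part (1) is essentially built into the construction, and I would dispose of it first. Recall $\mu_{i} = \rho^{\#}\theta_{i}$, where $\rho$ is the congruence on $\Delta$ generated by the set $\mathbf{R}$ of \eqref{eqn}. For every product $u\gamma_{0}u'$ in $U$ the pair $\bigl(\theta_{1}f_{1}(u\gamma_{0}u'),\,\theta_{2}f_{2}(u\gamma_{0}u')\bigr)$ lies in $\mathbf{R}\subseteq\rho$; applying $\rho^{\#}$ gives $\mu_{1}f_{1}(u\gamma_{0}u') = \mu_{2}f_{2}(u\gamma_{0}u')$, and since $U$ is generated by such products, $\mu_{1}f_{1} = \mu_{2}f_{2}$.

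For part (2), let $V$ be a $\Gamma$-semigroup equipped with $\Gamma$-morphisms $g_{i}:S_{i}\to V$ satisfying $g_{1}f_{1} = g_{2}f_{2}$. First I would apply Proposition \ref{propo 1} with $T=V$ and $\psi_{i}=g_{i}$ to obtain a unique $\Gamma$-morphism $\lambda:\Delta\to V$ with $\lambda\theta_{i}=g_{i}$ for $i=1,2$. The crucial step is to show $\lambda$ is constant on $\rho$-classes, i.e. $\rho\subseteq\ker\lambda$: since $\ker\lambda$ is a congruence on $\Delta$ and, for each $u\gamma_{0}u'\in U$,
\[
\lambda\bigl(\theta_{1}f_{1}(u\gamma_{0}u')\bigr) = g_{1}f_{1}(u\gamma_{0}u') = g_{2}f_{2}(u\gamma_{0}u') = \lambda\bigl(\theta_{2}f_{2}(u\gamma_{0}u')\bigr),
\]
the congruence $\ker\lambda$ contains $\mathbf{R}$, hence contains the congruence $\rho$ that $\mathbf{R}$ generates. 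Therefore $\delta(x\rho):=\lambda(x)$ is a well-defined $\Gamma$-morphism $\delta:\Delta_{U}\to V$ with $\delta\rho^{\#}=\lambda$ (well-definedness is exactly $\rho\subseteq\ker\lambda$, and $\delta$ is a $\Gamma$-homomorphism because $\rho^{\#}$ is a surjective one; this is the factorization of Theorem \ref{thm3}). It then follows that $\delta\mu_{i} = \delta\rho^{\#}\theta_{i} = \lambda\theta_{i} = g_{i}$, which is the required commutativity. For uniqueness, I would note that any $\delta'$ with $\delta'\mu_{i}=g_{i}$ satisfies $(\delta'\rho^{\#})\theta_{i}=g_{i}$, so $\delta'\rho^{\#} = \lambda = \delta\rho^{\#}$ by the uniqueness clause of Proposition \ref{propo 1}, and since $\rho^{\#}$ is surjective, $\delta'=\delta$; equivalently, $\Delta_{U}$ is generated by $\mu_{1}(S_{1})\cup\mu_{2}(S_{2})$, so a $\Gamma$-morphism out of $\Delta_{U}$ is pinned down by its restrictions to the $\mu_{i}(S_{i})$.

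I expect the only point needing genuine care to be the verification that $\rho\subseteq\ker\lambda$ — that is, the standard but slightly fiddly fact that the congruence generated by a set is contained in every congruence containing that set, together with the check that the induced map on $\Delta/\rho$ is well defined and is again a $\Gamma$-homomorphism. Everything else is a formal diagram chase through the morphisms supplied by Proposition \ref{propo 1}.
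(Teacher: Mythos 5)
Your proposal is correct and follows essentially the same route as the paper: invoke Proposition \ref{propo 1} with $T=V$, $\psi_i=g_i$ to get $\lambda:\Delta\to V$, show $\mathbf{R}\subseteq\ker\lambda$ so that the generated congruence $\rho$ lies in $\ker\lambda$, and factor $\lambda$ through $\Delta_U=\Delta/\rho$ via Theorem \ref{thm3} to obtain $\delta$ with $\delta\mu_i=\delta\rho^{\#}\theta_i=g_i$. Your explicit uniqueness argument (using surjectivity of $\rho^{\#}$ and the uniqueness clause of Proposition \ref{propo 1}) is in fact slightly more careful than the paper, which leaves that step implicit.
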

\begin{proof}
We have already observed property $[1]$. \\
To see  property $[2]$, notice that by Proposition \ref{propo 1}  there is a unique morphism $ \lambda : \Delta \to v $ such that 
$$
\begin{tikzcd} 
S_{i} \arrow[r,"\theta_{i}"] \arrow [d,"g_{i}"]  & \Delta \arrow[dl, "\lambda"] \\
V & 
\end{tikzcd} 
$$  is a commutative diagram for each $i=1,2$.

Now, for all $ u,u' \in U $ and $\gamma_{0} \in \Gamma $
\begin{eqnarray*}
\lambda \theta_{1}f_{1}(u\gamma_{0}u') &=& g_{1}f_{1}(u\gamma_{0}u') \\
& = & g_{2}f_{2}(u\gamma_{0}u') \\
& = & \lambda \theta_{2}f_{2}(u\gamma_{0}u')
\end{eqnarray*}
Hence $ ( \theta_{1}f_{1}(u\gamma_{0}u'), \theta_{2}f_{2}(u\gamma_{0}u')) \in \lambda \circ \lambda^{-1}$.
So from equation (\ref{eqn}) $\textbf{R} \subseteq \lambda \circ \lambda^{-1} $, 
hence, since $ \lambda \circ \lambda^{-1} $ is a congruence  
$$  \rho = \textbf{$R^{\#}$} \subseteq \lambda \circ \lambda^{-1}  $$
By Theorem \ref{thm3} it follows that the morphism $ \lambda : \Delta \to V $ factors through $ \Delta_{U}=S/\rho $,

that is, there is a unique morphism $\delta : \Delta_{U} \to V$ such that the diagram 
$$
\begin{tikzcd} 
\Delta \arrow[r,"\rho^{\#}"] \arrow [d,"\lambda"]  & \Delta_{U} \arrow[dl, "\delta"] \\
V & 
\end{tikzcd} 
 commutes. $$
By the definition of $g_{i}$ and from the commutativity of the above two diagrams,  for  $i=1,2$ 
$$ g_{i}=\lambda \theta_{i}= \delta \rho^{\#}\theta_{i}=\delta \mu_{i} $$
hence the diagram commutes.  
\end{proof}
Now we are going back to the proof of the theorem.
\begin{proof}
One way is obvious and suppose that $\textsc{A}$ is embeddable in a $ \Gamma $-semigroup $T$, so that there exists $ \Gamma $-monomorphisms $g_{i}=(g_{i}^{'},g_{i}^{''}): (S_{i},\Gamma_{i}) \to (T,\Gamma_{T})$   for $i=1,2 $ and $g=(g',g''): (U,\Gamma_{0}) \to (T,\Gamma_{T})$ such that $f_{1}g_{1}=f_{1}g_{2}=g$ and such that $g_{1}(S_{1}) \cap g_{2}(S_{2})= g(\lambda)$.

By Proposition \ref{propo3} there exists a unique $ \Gamma $-morphism $ \delta : \Delta_{U} \to T $ such that 
$$
\begin{tikzcd} 
S_{i} \arrow[r,"\mu_{i}"] \arrow [d,"g_{i}"]  & \Delta_{U} \arrow[dl, "\delta"] \\
T & 
\end{tikzcd} $$
is a commutative diagram for each $i=1,2$.

For $s\gamma s^{'}, s_{1}\gamma_{1}s_{1}^{'} \in (S_{1},\Gamma_{1})$ , if 
$\mu_{1}(s\gamma s^{'})= \mu_{1}(s_{1}\gamma_{1}s_{1}^{'}) $ then \\
Suppose
\begin{eqnarray*}
\delta( \mu_{1}(s\gamma s^{'})) &=& \delta(\mu_{1}(s_{1}\gamma_{1}s_{1}^{'})) \,\text{ so}\\
  g_{1}(s\gamma s^{'}) &=& g_{1}(s_{1}\gamma_{1}s_{1}^{'}) \\
  s\gamma s^{'} &=& s_{1}\gamma_{1}s_{1}^{'} 
\end{eqnarray*} 
\begin{center}
thus $ \mu_{1} $ is a $ \Gamma $-monomorphism.
\end{center}
Similarly we get $ \mu_{2} $ is a $ \Gamma $-monomorphism.

Suppose  $x \in \mu_{1}S_{1} \cap \mu_{2}S_{2}$, without loss of generality, assume $x=  \mu_{1}(s_{1}\gamma_{1}s_{1}^{'})=\mu_{2}(s_{2}\gamma_{2}s_{2}^{'})$ where $s_{1},s_{1}^{'} \in S_{1}$, $s_{2},s_{2}^{'} \in S_{2}$, $\gamma_{1} \in \Gamma_{1}$ and $\gamma_{2} \in \Gamma_{2}$. Then $\delta(x)=\delta(\mu_{1}(s_{1}\gamma_{1}s_{1}^{'}))= g_{1}(s_{1}\gamma_{1}s_{1}^{'}) \in g_{1}(S_{1})$. Similarly, $\delta(x) \in g_{2}(S_{2})$. Thus $\delta(x) \in g_{1}(S_{1}) \cap g_{2}(S_{2})=g(U)$ and so there exist $u=u\gamma_{0}u' \in U $ such that $\delta(x)=g(U)$.

That is 
\begin{eqnarray*}
g_{1}(s_{1}\gamma_{1}s_{1}^{'}) &=& \delta \mu_{1}(s_{1}\gamma_{1}s_{1}^{'})\\
&=& \delta(x)\\
&=& g(u)\\
&=& g_{1}f_{1}(u\gamma_{0}u')
\end{eqnarray*}
similarly
\begin{eqnarray*}
g_{2}(s_{2}\gamma_{2}s_{2}^{'}) &=& \delta \mu_{2}(s_{2}\gamma_{2}s_{2}^{'})\\
&=& \delta(x)\\
&=& g(u)\\
&=& g_{2}f_{2}(u\gamma_{0}u')
\end{eqnarray*}
Since $g_{i}$ is $ \Gamma $-monomorphism it follows that $f_{1}(u\gamma_{0}u')=s_{1}\gamma_{1}s_{1}^{'} $ and $f_{2}(u\gamma_{0}u')=s_{2}\gamma_{2}s_{2}^{'} $ and so $x= \mu_{1}(s_{1}\gamma_{1}s_{1}^{'})= \mu_{1}(f_{1}(u\gamma_{0}u'))$ and $x= \mu_{2}(s_{2}\gamma_{2}s_{2}^{'})=\mu_{2}f_{2}(u\gamma_{0}u') \in \mu(U)$.\\
Thus $\textsc{A}$ is naturally $ \Gamma $-embedded  in $ \Delta_{U} $ as required.
\end{proof}

\section{On Amalgam of Completely $ \alpha $-Regular  $ \Gamma $-Semigroups}
In the following we define completely $ \alpha $-regular  $ \Gamma $-semigroups and provide  necessary condition to the embeddability of their amalgam.
\begin{defn}
An element $a \in S$ is called  $ \alpha $-regular if there exist elements $x \in S$ and  $\alpha  \in \Gamma$ such that $a=a \alpha x \alpha a$. If every element of a $\Gamma$-semigroup is $ \alpha $-regular  in $S$ then $S$ is called $ \alpha $-regular $\Gamma$-semigroup.
\end{defn}
\begin{defn}
For an $ \alpha $-regular $\Gamma$-semigroup $S$, an element $b \in S$ is called $ \alpha $- inverse of an element $a \in S$ if $a=a \alpha b \alpha a$ and $b=b \alpha a \alpha b$ for all $a,b \in S$ and $\alpha  \in \Gamma$.
\end{defn}
\begin{defn}
An $ \alpha -$regular semigroup $S$ is called $ \Gamma $-inverse $\Gamma$-semigroup if every element in $S$ has a unique $ \alpha $-inverse in $S$.
\end{defn}
\begin{defn} An element $a$ is completely $ \alpha $-regular  if there exist an element $x$ in $S$ such that $a=a \alpha x \alpha a$ and $a \alpha x = x \alpha a$ for all $a, x \in S$ and $\alpha \in \Gamma $. If all the elements in a $\Gamma$-semigroup $S$ is completely $ \alpha $-regular then $S$ is called completely $ \alpha $-regular $\Gamma$-semigroup.
\end{defn}

\begin{thm}
A $\Gamma$-semigroup amalgam $(U; S_{1}, S_{2},\phi_{1},\phi_{2})$, in which $S_1$ and
$S_2$ are completely $ \alpha $-regular $\Gamma$-semigroups over the same $ \Gamma $ is embeddable only if $U$ is also completely $ \alpha $-regular.
\end{thm}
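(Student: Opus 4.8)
\emph{Setup.} Suppose the amalgam embeds in a $\Gamma$-semigroup $(T,\Gamma_T)$, so there are $\Gamma$-monomorphisms $g\colon U\to T$ and $g_i\colon S_i\to T$ with $g=g_i\circ\phi_i$ for $i=1,2$ and, by condition~[2], $g_1(S_1)\cap g_2(S_2)=g(U)$. Since $S_1$ and $S_2$ are completely $\alpha$-regular over the \emph{same} $\Gamma$, I take $\alpha\in\Gamma$ to be the common distinguished element, and since the amalgam is over the same $\Gamma$ the two maps on the $\Gamma$-parts coincide, so there is a single induced map $\gamma\mapsto\bar\gamma$ of $\Gamma$ into $\Gamma_T$; in particular there is one element $\bar\alpha$ to work with in $T$. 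The plan is to transport complete $\alpha$-regularity of $S_1$ and $S_2$ through $T$ back to $U$.

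\emph{Producing a candidate inverse.} Fix $a\in U$. Complete $\alpha$-regularity of $\phi_1(a)$ in $S_1$ gives $x\in S_1$ with $\phi_1(a)=\phi_1(a)\alpha x\alpha\phi_1(a)$ and $\phi_1(a)\alpha x=x\alpha\phi_1(a)$; replacing $x$ by $x\alpha\phi_1(a)\alpha x$ I may further assume $x=x\alpha\phi_1(a)\alpha x$, i.e.\ that $x$ is a commuting $\alpha$-inverse of $\phi_1(a)$ (a routine check using associativity of the ternary operation, together with the fact that $\phi_1(a)\alpha x=x\alpha\phi_1(a)$ is a $\beta$-idempotent acting as a two-sided $\alpha$-identity on $\phi_1(a)$ and on $x$). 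Call this element $x_1$, and produce $x_2\in S_2$ likewise relative to $\phi_2(a)$. Applying $g_1,g_2$ and writing $\bar a=g(a)=g_1\phi_1(a)=g_2\phi_2(a)$ and $y_i=g_i(x_i)$, we obtain inside $T$, for $i=1,2$, the relations
$\bar a=\bar a\,\bar\alpha\,y_i\,\bar\alpha\,\bar a$, $\;y_i=y_i\,\bar\alpha\,\bar a\,\bar\alpha\,y_i$, $\;\bar a\,\bar\alpha\,y_i=y_i\,\bar\alpha\,\bar a$.

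\emph{The uniqueness lemma.} The crux is to show $y_1=y_2$; since $T$ need not be completely $\alpha$-regular this cannot be dodged and must be done by hand. I would prove: in any $\Gamma$-semigroup, an element $t$ has \emph{at most one} commuting $\beta$-inverse for a fixed $\beta\in\Gamma_T$. Given two such, $y$ and $z$, set $e=t\beta y=y\beta t$ and $f=t\beta z=z\beta t$. One checks $e\beta e=e$, $f\beta f=f$, that $t\beta e=e\beta t=t$, $t\beta f=f\beta t=t$, $y\beta e=e\beta y=y$, $z\beta f=f\beta z=z$; then substituting $t=t\beta z\beta t$ into $y=y\beta t\beta y$ gives $y=e\beta z\beta e$, and symmetrically $z=f\beta y\beta f$. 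From these, $e=t\beta y=t\beta(e\beta z\beta e)=(t\beta e)\beta z\beta e=t\beta z\beta e=f\beta e$, and the symmetric computation gives $f=e\beta f$; but also $e\beta f=(y\beta t)\beta f=y\beta(t\beta f)=y\beta t=e$, so $f=e\beta f=e$. Once $e=f$, $y=e\beta z\beta e=f\beta z\beta f=z$. Applying this in $T$ with $t=\bar a$, $\beta=\bar\alpha$ gives $y_1=y_2$, i.e.\ $g_1(x_1)=g_2(x_2)$.

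\emph{Pulling back.} Since $g_1(x_1)=g_2(x_2)\in g_1(S_1)\cap g_2(S_2)=g(U)$, there is $x\in U$ with $g(x)=g_1(x_1)=g_2(x_2)$. Then $g(a\alpha x\alpha a)=\bar a\,\bar\alpha\,g(x)\,\bar\alpha\,\bar a=\bar a\,\bar\alpha\,y_1\,\bar\alpha\,\bar a=\bar a=g(a)$ and $g(a\alpha x)=\bar a\,\bar\alpha\,g(x)=\bar a\,\bar\alpha\,y_1=y_1\,\bar\alpha\,\bar a=g(x)\,\bar\alpha\,g(a)=g(x\alpha a)$; as $g$ is injective, $a=a\alpha x\alpha a$ and $a\alpha x=x\alpha a$ hold in $U$. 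Since $a\in U$ was arbitrary, $U$ is completely $\alpha$-regular. The only genuine obstacle is the uniqueness lemma — getting the idempotent bookkeeping exactly right, and making sure the hypothesis ``over the same $\Gamma$'' really delivers a single $\bar\alpha$ throughout; the reduction to a commuting inverse in $S_i$ and the final pull-back along $g$ are routine.
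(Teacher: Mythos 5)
Your proof is correct and follows essentially the same route as the paper: produce $\alpha$-inverses of $\phi_1(a)$ and $\phi_2(a)$ in $S_1$ and $S_2$, show their images in $T$ coincide, then use condition [2] to pull the inverse back into $U$ and use injectivity of the embedding to recover the identities there. Your uniqueness lemma for commuting $\beta$-inverses is exactly what the paper's long displayed chain of equalities computes by hand, and your preliminary step of replacing $x$ by $x\alpha\phi_1(a)\alpha x$ is a worthwhile addition, since the paper's definition of completely $\alpha$-regular does not by itself supply the relation $x=x\alpha a\alpha x$ that its computation (first line, $s_2^{-1}=s_2^{-1}\gamma s_2\gamma s_2^{-1}$) silently uses.
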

\begin{proof}
Consider an amalgam $A = (U; S_1; S_2; \phi_1; \phi_2)$ in which $S_1$ and $S_2$
are both completely $ \alpha $-regular $\Gamma$-semigroups. Suppose $ \textsc{A}$ is embeddable, say in a $\Gamma$-semigroup $T$ and consider an element $u \in U$.\\
Let us denote $\phi_{1}(u ) = s_1 $ and $\phi_{2}(u) = s_2 $.\\
Since $S_1$ and $S_2$ are completely $ \alpha $-regular, there exist inverses $s_{1}^{-1} \in S_1$
and $s_{2}^{-1} \in S_2$ of $s_1  $ and $s_2 $ respectively. Let  $\psi_{i}: S_{i} \to T, i \in {1,2} $ be the embedding $ \Gamma $-monomorphisms, then $$ \psi_{1}(s_1) =  \psi_{1}\phi_{1}(u) =  \psi_{2}\phi_{2}(u) =\psi_{2}(s_2)$$. 
We can calculate in $T$:
\begingroup
\allowdisplaybreaks
\begin{align*}
\psi_{2}(s_{2}^{-1}) &= \psi_{2}(s_{2}^{-1} \gamma s_{2} \gamma s_{2}^{-1} ) \\
& = \psi_{2}(s_{2}^{-1}) \gamma \psi_{2}(s_{2}) \gamma \psi_{2}(s_{2}^{-1}) \\
&= \psi_{2}(s_{2}^{-1}) \gamma \psi_{1}(s_{1}) \gamma \psi_{2}(s_{2}^{-1}) \\
&= \psi_{2}(s_{2}^{-1}) \gamma \psi_{1}(s_{1} \gamma s_{1}^{-1} \gamma s_{1}) \gamma \psi_{2}(s_{2}^{-1}) \\
&= \psi_{2}(s_{2}^{-1}) \gamma \psi_{1}(s_{1}) \gamma \psi_{1}( s_{1}^{-1}) \gamma \psi_{1} (s_{1}) \gamma \psi_{2}(s_{2}^{-1}) \\
&= \psi_{2}(s_{2}^{-1}) \gamma \psi_{2}(s_{2}) \gamma \psi_{1}( s_{1}^{-1}) \gamma \psi_{1} (s_{1}) \gamma \psi_{2}(s_{2}^{-1}) \\
&= \psi_{2}(s_{2}^{-1} \gamma s_{2}) \gamma \psi_{1}( s_{1}^{-1}) \gamma \psi_{1} (s_{1}) \gamma \psi_{2}(s_{2}^{-1}) \\
&= \psi_{2}(s_{2}^{-1} \gamma s_{2}) \gamma \psi_{1}( s_{1}^{-1} \gamma s_{1}) \gamma \psi_{2}(s_{2}^{-1}) \\
&= \psi_{2}(s_{2}^{-1} \gamma s_{2}) \gamma \psi_{1}(s_{1}  \gamma s_{1}^{-1} ) \gamma \psi_{2}(s_{2}^{-1}) \\
&= \psi_{2}(s_{2}^{-1} \gamma s_{2}) \gamma \psi_{1}(s_{1} )  \gamma \psi_{1}(s_{1}^{-1} ) \gamma \psi_{2}(s_{2}^{-1}) \\
&= \psi_{2}(s_{2}^{-1} \gamma s_{2}) \gamma \psi_{2}(s_{2} )  \gamma \psi_{1}(s_{1}^{-1} ) \gamma \psi_{2}(s_{2}^{-1}) \\
&= \psi_{2}(s_{2}  \gamma s_{2}^{-1}) \gamma \psi_{2}(s_{2} )  \gamma \psi_{1}(s_{1}^{-1} ) \gamma \psi_{2}(s_{2}^{-1}) \\
&= \psi_{2}(s_{2}  \gamma s_{2}^{-1} \gamma s_{2} )  \gamma \psi_{1}(s_{1}^{-1} ) \gamma \psi_{2}(s_{2}^{-1}) \\
&= \psi_{2}(s_{2} )  \gamma \psi_{1}(s_{1}^{-1} ) \gamma \psi_{2}(s_{2}^{-1}) \\
&= \psi_{1}(s_{1} )  \gamma \psi_{1}(s_{1}^{-1} ) \gamma \psi_{2}(s_{2}^{-1}) \\
&= \psi_{1}(s_{1}  \gamma s_{1}^{-1} ) \gamma \psi_{2}(s_{2}^{-1}) \\
&= \psi_{1}(s_{1}^{-1}  \gamma s_{1} ) \gamma \psi_{2}(s_{2}^{-1}) \\
&= \psi_{1}(s_{1}^{-1})  \gamma \psi_{1}(s_{1} ) \gamma \psi_{2}(s_{2}^{-1}) \\
&= \psi_{1}(s_{1}^{-1})  \gamma \psi_{2}(s_{2} ) \gamma \psi_{2}(s_{2}^{-1}) \\
&= \psi_{1}(s_{1}^{-1})  \gamma \psi_{2}(s_{2}  \gamma s_{2}^{-1}) \\
&= \psi_{1}(s_{1}^{-1})  \gamma \psi_{2}(s_{2}^{-1}  \gamma s_{2}) \\
&= \psi_{1}(s_{1}^{-1})  \gamma \psi_{2}(s_{2}^{-1})  \gamma \psi_{2}(s_{2}) \\
&= \psi_{1}(s_{1}^{-1})  \gamma \psi_{2}(s_{2}^{-1})  \gamma \psi_{1}(s_{1}) \\
&= \psi_{1}(s_{1}^{-1})  \gamma \psi_{2}(s_{2}^{-1})  \gamma \psi_{1}(s_{1} \gamma s_{1}^{-1} \gamma s_{1}) \\
&= \psi_{1}(s_{1}^{-1})  \gamma \psi_{2}(s_{2}^{-1})  \gamma \psi_{1}(s_{1}) \gamma \psi_{1}(s_{1}^{-1}) \gamma \psi_{1}(s_{1}) \\
&= \psi_{1}(s_{1}^{-1})  \gamma \psi_{2}(s_{2}^{-1})  \gamma \psi_{2}(s_{2}) \gamma \psi_{1}(s_{1}^{-1}) \gamma \psi_{1}(s_{1}) \\
&= \psi_{1}(s_{1}^{-1})  \gamma \psi_{2}(s_{2}^{-1}  \gamma s_{2}) \gamma \psi_{1}(s_{1}^{-1} \gamma s_{1}) \\
&= \psi_{1}(s_{1}^{-1})  \gamma \psi_{2}(s_{2}  \gamma s_{2}^{-1}) \gamma \psi_{1}(s_{1} \gamma s_{1}^{-1}) \\
&= \psi_{1}(s_{1}^{-1})  \gamma \psi_{2}(s_{2}  \gamma s_{2}^{-1}) \gamma \psi_{1}(s_{1}) \gamma \psi_{1}(s_{1}^{-1}) \\
&= \psi_{1}(s_{1}^{-1})  \gamma \psi_{2}(s_{2}  \gamma s_{2}^{-1}) \gamma \psi_{2}(s_{2}) \gamma \psi_{1}(s_{1}^{-1}) \\
&= \psi_{1}(s_{1}^{-1})  \gamma \psi_{2}(s_{2}  \gamma s_{2}^{-1} \gamma s_{2}) \gamma \psi_{1}(s_{1}^{-1}) \\
&= \psi_{1}(s_{1}^{-1})  \gamma \psi_{2}(s_{2} ) \gamma \psi_{1}(s_{1}^{-1}) \\
&= \psi_{1}(s_{1}^{-1})  \gamma \psi_{1}(s_{1} ) \gamma \psi_{1}(s_{1}^{-1}) \\
&= \psi_{1}(s_{1}^{-1}  \gamma s_{1}  \gamma s_{1}^{-1}) \\
&= \psi_{1}(s_{1}^{-1})
\end{align*}
\endgroup
Now, using condition (2) of embeddability, there exists $u' \in  U$ such that
$\phi_{i}(u') = s_{i}^{-1}$ for $i \in \{ 1, 2\}$.  Then, because $s_{1} \gamma s_{1}^{-1} \gamma s_{1} = s_{1}$ implies $\phi_{1}^{-1}(s_{1} \gamma s_{1}^{-1} \gamma s_{1}) =\phi_{1}^{-1}( s_{1})$
, we have $u \gamma u' \gamma u =u$ due to injectivity of $\phi_{1}$. Similarly we can conclude
that $u' \gamma u \gamma u' = u'$ and  $u \gamma u' = u' \gamma u$. Thus $U$ is completely $\Gamma$-regular.
\end{proof}

\end{document}